\newcommand{\realset}[1]{{Z_\RR(#1)}}
\newcommand{\zeroset}[1]{{Z(#1)}}
\newcommand\restr[2]{{
  \left.\kern-\nulldelimiterspace 
  #1 
  \vphantom{\big|} 
  \right|_{#2} 
  }}
\DeclareMathOperator\mult{mult}
\newcommand{\zar}[1]{\rm{Zar}(#1)}
\newcommand{\boundary}{\partial}
\newcommand{\bc}{\begin{center}}
\newcommand{\ec}{\end{center}}
\newcommand{\scrE}{\mathscr{E}}
\def\NN{{\mathbbm{N}}}
\def\ZZ{{\mathbbm{Z}}}
\def\QQ{{\mathbbm{Q}}}
\def\RR{{\mathbbm{R}}}
\def\CC{{\mathbbm{C}}}
\def\PP{{\mathbbm{P}}}
\definecolor{mycolor}{rgb}{0.28,0.69,0.7}
\definecolor{mycolor1}{rgb}{0.7,0.75,0.71}
\definecolor{mycolor2}{rgb}{0.58,1,1}
\def\BState{\State\hskip-\ALG@thistlm}
\def\ss{{\mathbbm{S}}}
\def\Id{{\mathbbm{I}}}
\newtheorem{theorem}{Theorem}
\newtheorem{proposition}[theorem]{Proposition}   
\newtheorem{lemma}[theorem]{Lemma}   
\newtheorem{corollary}[theorem]{Corollary}   
\newtheorem{remark}[theorem]{Remark}
\theoremstyle{remark}
\newtheorem{example}[theorem]{Example}   
\newtheorem{exper}[theorem]{Test}
\def\spec{\mathscr{S}}
\def\cc{\mathcal{C}}
\def\ch{\mathrm{ch}}
\def\deg{{\rm deg}}
\def\mymid{{\,\,:\,\,}}
\newcommand\Secgamma{\Gamma'}
\newcommand\Seclambda{\Lambda'}
\newcommand{\new}[1]{{#1}}
\title{Symbolic computation in hyperbolic programming}
\author{Simone Naldi}
\author{Daniel Plaumann}
\address{Technische Universit\"at Dortmund,
  Fakult\"at f\"ur Mathematik, Vogelpothsweg 87, 44227 Dortmund}
\email{simone.naldi@tu-dortmund.de}
\email{daniel.plaumann@tu-dortmund.de}
\subjclass[2010]{14Q20, 68W30, 90C22, 90C25}
\date{\today}
\begin{document}

\maketitle

\begin{abstract}
\noindent
Hyperbolic programming is the
problem of computing the infimum of a linear function when restricted
to the hyperbolicity cone of a hyperbolic polynomial, a generalization of
semidefinite programming. We propose an approach based on symbolic
computation, relying on the multiplicity structure of the algebraic
boundary of the cone, without the assumption of determinantal
representability. This allows us to design exact algorithms able to
certify the multiplicity of the solution and the optimal value of the
linear function.
\end{abstract}

\section*{Introduction}
\label{sec:intro}
Semidefinite programming (SDP) constitutes a popular class of convex
optimization problems for which approximate solutions can be
computed through a variety of numerical algorithms, the most
efficient of which are based on primal-dual interior point methods.
On the other hand, exact algorithms for general semidefinite
programs have been developed only much more recently in the work of
Henrion, Safey El Din and the first author in \cite{HNS2015c}. The
optimizer in an SDP problem corresponds to a positive
semidefinite real symmetric matrix. While symbolic algorithms
obviously have a much higher complexity than numerical ones, finding
exact solutions has many benefits, especially regarding certification
of the solution. For instance, the rank of the optimizer, which is
often a meaningful quantity, can be determined exactly by the algorithm
in \cite{HNS2015c}, and one can even optimize over all feasible points
of bounded rank, which is a non-convex optimization problem \cite{naldiIssac2016}.

In this paper, we consider analogous algorithmic questions in the more
general framework of \emph{hyperbolic programming}. We briefly
summarize the underlying notions. A real homogeneous polynomial $f$ in
several variables $x=(x_1,\dots,x_n)$ is called \emph{hyperbolic} with
respect to a point $e\in\RR^{n}$ if $f(e)\neq 0$ and the polynomial
$f(te-a) \in \RR[t]$ has only real zeros for every $a \in \RR^{n}$. The
general determinant of symmetric matrices has this property with
respect to the unit matrix $e=\Id_d$, since $\det(t\Id_d-A)$ is the 
classical characteristic polynomial of the real symmetric matrix $A$.
Hyperbolic polynomials
can therefore be seen as generalized characteristic polynomials.
If $f$ is hyperbolic with respect to $e$, the hypersurface defined by $f$
bounds a convex cone containing $e$, the \emph{hyperbolicity cone},
generalizing the cone of positive semidefinite matrices in case of the
determinant. The zeros of $f(te-a)$ can be regarded as generalized
eigenvalues of $a \in \RR^{n}$, and the multiplicity of the root $t=0$ of
$f(te-a)$ as the corank of $a$. 

A \emph{hyperbolic program} is the
convex optimization problem of minimizing a linear function over the
hyperbolicity cone of a hyperbolic polynomial. \new{Such cones have non-empty
interior by construction (the interior will indeed contain the point $e$).
Denote now by $\ss_d(\RR)$ the set of $d \times d$ real symmetric matrices.
{\it Regular} semidefinite programs (in which the feasible set has non-empty interior) correspond
to the} case in which $f$ is the restriction of the determinant
\new{map $\det \colon \ss_d(\RR) \to \RR$} to a linear subspace \new{$V
\subset \ss_d(\RR)$ containing a positive definite matrix.
More precisely, for such $V$, the polynomial $f = \restr{\det}{V}$ is hyperbolic,
and its hyperbolicity cone is the spectrahedron $V \cap \ss^+_d(\RR)
= \{M \in V \mymid M \succeq 0\}$. If $V$
does not contain positive definite matrices, $V \cap \ss^+_d(\RR)$ is still
a spectrahedron, but $f = \restr{\det}{V}$ is not hyperbolic.

Moreover, not every hyperbolic polynomial can be represented in this way
(in fact, the set of representable polynomials is, in general, of strictly smaller dimension) which motivates the development of techniques that are independent of the
determinantal representability of $f$.}

Hyperbolic programming can be solved numerically with interior point methods much like SDP
\cite{deklerk,guler1997hyperbolic,nestnem}.
  \begin{figure}[h]
    \centering
\includegraphics[width=10cm]{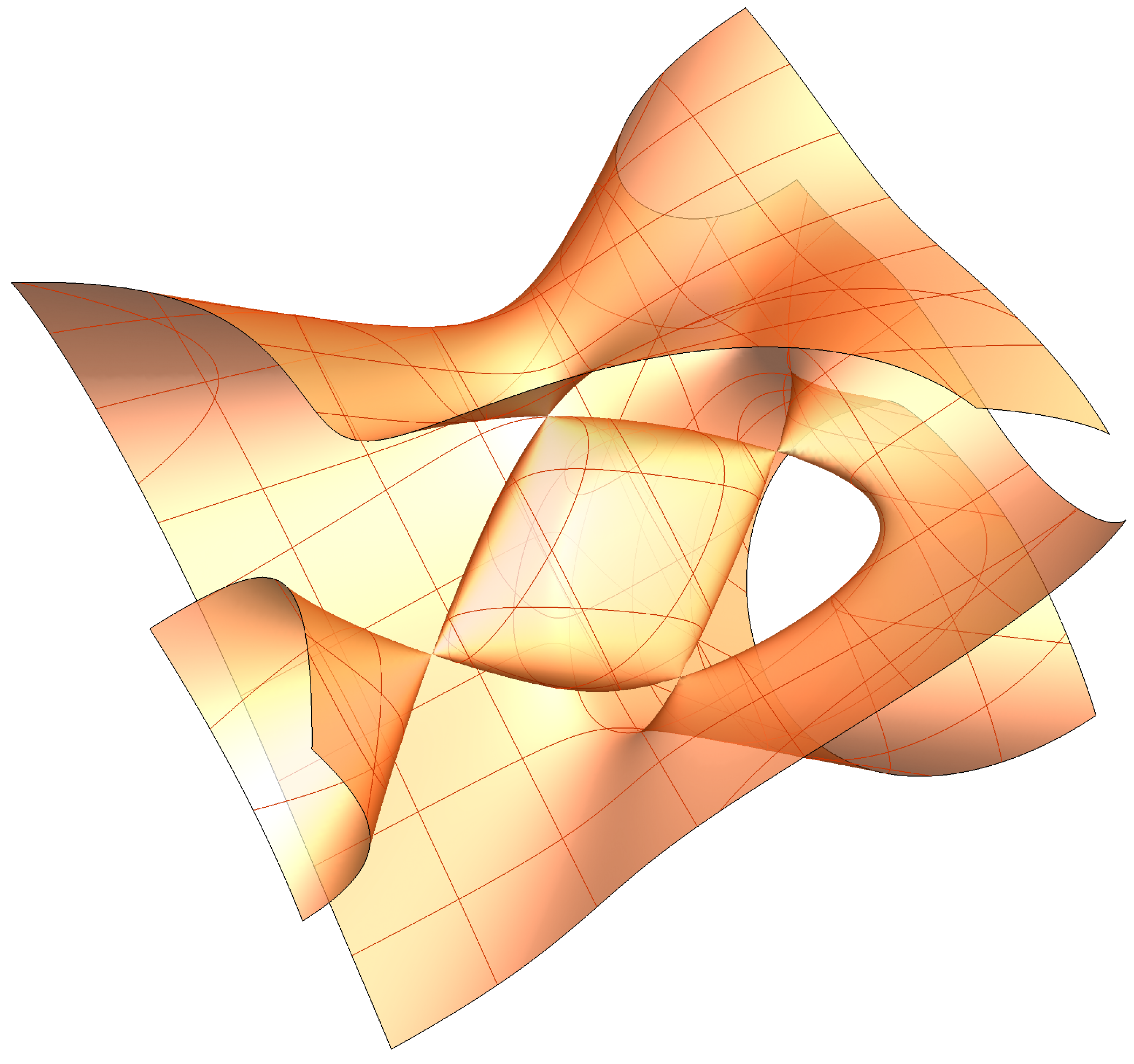}\\
    \caption{\small An affine section of a hyperbolic quartic surface with
      four nodes}\label{Fig:NodalQuartic}
  \end{figure}
One of the major challenges in hyperbolic programming, when compared to
SDP, is the lack of an explicit duality theory,
while SDP duality is always heavily exploited. The methods in
\cite{HNS2015c} rely on the good properties of
determinantal varieties, which provide an explicit non-singular
lifting of the variety of symmetric matrices of bounded rank in a
given subspace.  The same is not available for hyperbolic programming.
However, hyperbolicity of a real polynomial still imposes some
strong conditions on the structure of the real part of the singular
locus of the hypersurface.

Let us give an overview of our main results. Given a polynomial $f$, hyperbolic with respect
to $e \in \RR^n$, let $\Lambda_+$ be the hyperbolicity cone \new{of $f$} (Section \ref{sec:hypcones}) and let $\Gamma_m \subset \RR^n$
denote the set of points of multiplicity at least $m$ (see Section \ref{sec:multiplicity}). Furthermore, let \new{$L_e = \{x \in \RR^n \mymid e^Tx = 1\}$ be the affine space orthogonal
to the direction $e$ (containing $\frac{e}{\|e\|^2}$)
and write $\Lambda_+'=\Lambda_+\cap L_e$ and $\Gamma'_m=\Gamma_m\cap L_e$.} We show that if $m$ equals
the maximal multiplicity on $\new{\Seclambda_+}$, then
$\new{\Seclambda_+}$ contains one of the real connected components of $\new{\Secgamma_m}$,
proving that $\new{\Secgamma_m} \cap \Lambda_+$ is the union of some
components of $\new{\Secgamma_m}$ (Proposition \ref{prop-component}).
Thus a point of maximal multiplicity (analogous to the minimal rank
in SDP) can be found by sampling the connected components of $\new{\Secgamma_m}$.
Since this is an algebraic set (rather than just semialgebraic),
this reduces to a standard problem in computational real algebraic
geometry.
Furthermore, we show that the more general convex hyperbolic programming
problem over $\new{\Seclambda_+}$ is equivalent to computing local minimizers
over the sets $\new{\Secgamma_m}$ of the same linear function (Theorem \ref{theo:reduction}).
This can be carried out in practice using Lagrange multipliers, provided
that the corresponding set of critical points has complex dimension $0$.
We use these results to design an exact algorithm for hyperbolic
programming. Applying this to explicit examples yields interesting
results that are discussed in the final part.

The structure of this paper is as follows. In Section
\ref{sec:hypcones}, we summarize
standard definitions and results about hyperbolicity cones. In Section \ref{sec:multiplicity},
we describe the multiplicity structure of the algebraic boundary of $\Lambda_+$ and
prove our main result on the maximal multiplicity. This is used to certify feasible
multiplicities in the case where $\Lambda_+$ is the $d$-elliptope.
In Section \ref{sec:HP}, we formalize the relationship between solutions to hyperbolic
programming problems and the multiplicity loci $\Gamma_m$. Our algorithm solving
hyperbolic programming in exact arithmetic is implemented in Maple; we
finally discuss the
results of our tests.

\medskip
{\textit{Acknowledgements.} Work on this project was partially
  supported through DFG grant PL 549/3-1 \textit{Convexity in Real
    Algebraic Geometry}.


\section{Preliminaries}
\label{sec:hypcones}

The following notation is used throughout. The ring of real
polynomials in $x=(x_1, \ldots, x_n)$ is denoted by $\RR[x]$, with its
natural grading $\RR[x]=\bigoplus_d \RR[x]_d$.  The complex algebraic
set defined by polynomials \new{$f = (f_1, \ldots, f_s) \in \RR[x]^s$}
is denoted by
$\zeroset{f} = \{x \in \CC^{n} \mymid \forall\,i \,\, f_i(x) = 0\}$,
and its subset of real points by $\realset{f} = \zeroset{f} \cap \RR^{n}$. The
closure of a set $S \subset \RR^{n}$ in the Euclidean topology
(resp.~Zariski topology) of $\RR^{n}$ is $\overline{S}$
(resp.~$\zar{S}$). Its Euclidean boundary is $\partial S$.

\medskip Recall from the introduction that a homogeneous polynomial
$f\in\RR[x]$ of degree $d$ is called \emph{hyperbolic} with respect to a point
$e\in\RR^{n}$ if $f(e)\neq 0$ and \new{$f(te-a)\in\RR[t]$} has only real roots
for every $a\in\RR^{n}$. Up to rescaling $f$, we may suppose that $f(e) =
1$, and we often say that $f$ is just {\it hyperbolic}, without
specifying the direction $e$.

The polynomial
\[
t \mapsto \ch_a(t;f,e) = f(t e-a)
\]
is called the {\it characteristic polynomial} of $a\in\RR^{n}$ (with respect to $f$ and $e$). This will be
denoted simply by $\ch_a(t)$ when $f,e$ are understood from the context. For
$a \in \RR^{n}$, the ordered roots
$
\lambda_1(a) \leq \lambda_2(a) \leq \cdots \leq \lambda_d(a) 
$
of $\ch_a$ are called the {\it eigenvalues} of $a$. The set
\[
\Lambda_{++}(f,e) = \{a \in \RR^{n} \mymid \lambda_1(a) > 0\}
\]
is called the {\it open hyperbolicity cone} of $f$ with respect to $e$. This is an open, convex cone containing $e$
\cite{garding1959inequality}. \new{Note that there may exist several hyperbolicity cones associated to a given
hyperbolic polynomial, and bounds on the number of such cones have been recently computed in \cite{thorstenthorsten};
however, when $f$ is irreducible, there is only one hyperbolicity cone, up to changing
$e$ with $-e$ (see \cite{kummer})}.
We will simply denote $\Lambda_{++}(f,e)$ by $\Lambda_{++}$ when $f,e$ are fixed. It is independent of the choice of $e$
within $\Lambda_{++}$, by the following basic result.

\begin{theorem}[G{\aa}rding \cite{garding1959inequality}, Renegar \cite{renegar2006hyperbolic}]
\label{theo:hypconeGarding}
The cone $\Lambda_{++}$ is convex, open and coincides with the connected component of $\RR^{n} \setminus \realset{f}$
containing $e$. Moreover, $f$ is hyperbolic with respect to $e'$ for all $e' \in \Lambda_{++}(f,e)$, and $\Lambda_{++}(f,e)
=\Lambda_{++}(f,e')$.
\end{theorem}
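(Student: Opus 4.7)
The plan is to prove in order that $\Lambda_{++}$ is open and equals the connected component of $e$ in $\RR^{n}\setminus\realset{f}$; that it is convex (the main obstacle, G{\aa}rding's theorem); and that $f$ is hyperbolic with respect to every $e'\in\Lambda_{++}$, with the same cone. For the first step, observe that $f(te-a)$ is monic in $t$ of degree $d$ with leading coefficient $f(e)=1$, and by hyperbolicity factors as $\prod_{i=1}^d(t-\lambda_i(a))$. Setting $t=0$ and using $f(-a)=(-1)^df(a)$ yields $f(a)=\prod_i\lambda_i(a)$; at $a=e$ this gives $\lambda_i(e)=1$ for all $i$, so $e\in\Lambda_{++}$. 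Since $\lambda_1$ depends continuously on $a$, the set $\Lambda_{++}=\{a\mymid\lambda_1(a)>0\}$ is open, and the product formula yields $\Lambda_{++}\cap\realset{f}=\emptyset$. For $a\in\Lambda_{++}$ and $s\in[0,1]$, homogeneity of $f$ in $sa$ gives the identity $f(te-(sa+(1-s)e))=\prod_i(t-(1-s)-s\lambda_i(a))$, whose roots are all positive; so the segment $[e,a]$ lies in $\Lambda_{++}$, making $\Lambda_{++}$ star-shaped at $e$ and contained in the component $C$ of $e$. Conversely, for $b\in C$, pick any continuous path $\gamma\colon[0,1]\to C$ from $e$ to $b$: the eigenvalues $\lambda_i(\gamma(s))$ are continuous in $s$ and nonvanishing (else $\gamma(s)\in\realset{f}$), so positivity at $s=0$ persists to $s=1$ and $b\in\Lambda_{++}$; thus $\Lambda_{++}=C$.

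Convexity is the hard step, and I would appeal to the classical proof that $\lambda_1$ is concave on $\Lambda_{++}$, from which $\{\lambda_1>0\}=\Lambda_{++}$ is convex as a superlevel set. The standard argument proceeds by induction on $d=\deg f$, using that the directional derivative $D_ef$ is again hyperbolic with respect to $e$ and has eigenvalues interlacing those of $f$ by Rolle's theorem, together with a variational description of $\lambda_1$ in terms of the lower-degree hyperbolic polynomial $D_ef$.

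For the last claim, I would use the characterization that $f$ is hyperbolic with respect to $v\in\RR^{n}$ if and only if $f(v)\neq 0$ and $f(x+iyv)\neq 0$ for all $x\in\RR^{n}$ and $y>0$: one direction holds at $v=e$ because $f(x+iye)=\prod_j(iy-\lambda_j(-x))$ is a product of nonzero complex numbers when $y>0$, and the converse is immediate since any nonreal root $t_0+iy_0$ of $t\mapsto f(tv-a)$ with $y_0>0$ would yield $f((t_0v-a)+iy_0v)=0$. To transfer hyperbolicity from $e$ to $e'\in\Lambda_{++}$, use convexity to keep the segment $e_s=(1-s)e+se'$ inside $\Lambda_{++}$ and propagate the non-vanishing of $f$ on the tube $\RR^{n}+i\RR_{>0}e_s$ from $s=0$ to $s=1$ by a continuity argument in $s$, using that $f(e_s)>0$ along the segment. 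Once hyperbolicity at $e'$ is established, $\Lambda_{++}(f,e)=\Lambda_{++}(f,e')$ follows from the component characterization proved above, since both cones agree with the connected component of $\RR^{n}\setminus\realset{f}$ containing the segment $[e,e']$.
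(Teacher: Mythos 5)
The paper states this theorem with citations to G{\aa}rding and Renegar but gives no proof, so there is no internal argument to compare against; what follows assesses your sketch directly. Your first block is correct and cleanly executed: $f(te-e)=(t-1)^d$ gives $\lambda_i(e)=1$; $f(a)=\prod_i\lambda_i(a)$ via homogeneity; $\Lambda_{++}$ is open by continuity of the ordered roots and disjoint from $\realset{f}$; the segment identity $f\bigl(te-(sa+(1-s)e)\bigr)=\prod_i\bigl(t-(1-s)-s\lambda_i(a)\bigr)$ shows star-shapedness at $e$; and the path argument gives the reverse inclusion $C\subset\Lambda_{++}$. Deferring convexity to the classical interlacing/concavity-of-$\lambda_1$ argument is acceptable here, since the paper itself only cites the result.

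The gap is in the transfer of hyperbolicity from $e$ to $e'$. The characterization you invoke ($f$ hyperbolic at $v$ iff $f(v)\neq 0$ and $f(x+iv)\neq 0$ for all real $x$; the parameter $y>0$ reduces to $y=1$ by homogeneity) is correct, and your final sentence — that $\Lambda_{++}(f,e)=\Lambda_{++}(f,e')$ once hyperbolicity at $e'$ is known — follows from the component identification. But the ``continuity argument in $s$'' along $e_s=(1-s)e+se'$ does not close as written. The set $S=\{s\in[0,1]\mymid f \text{ hyperbolic at } e_s\}$ is indeed closed (the leading coefficient $f(e_s)$ stays positive on the segment, so roots of $f(te_s-a)$ vary continuously and a limit of real roots is real), but \emph{openness} of $S$ is precisely the subtle global statement being proved: a monic polynomial with all real roots can perturb, near a multiple root, into one with a conjugate pair of complex roots, so no local argument alone yields openness. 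The classical proof avoids the openness question entirely. Fix $a\in\RR^n$ and consider the bivariate polynomial $(\mu,t)\mapsto f(a+\mu e+te')$. The $u$-roots of $u\mapsto f(e+ue')$ are $-1/\lambda_j(e')$, all real and strictly negative — this is exactly where $e'\in\Lambda_{++}$ is used — so for $\mu=i\tau$ with $\tau\gg 0$ all $t$-roots lie in the open lower half-plane. As $\mu$ ranges over the connected upper half-plane, a $t$-root can never hit $\RR$, since a real $t$ would make $\mu\mapsto f((a+te')+\mu e)$ vanish at a point with $\mathrm{Im}\,\mu>0$, contradicting hyperbolicity at $e$. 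Letting $\mu\to 0$ and using conjugate symmetry then forces every $t$-root to be real, i.e.\ $f$ is hyperbolic at $e'$. If you want to keep the structure of your sketch, you should replace the $s$-continuity step with this argument.
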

The closure of $\Lambda_{++}$ in the Euclidean topology is called the
\emph{hyperbolicity cone}. It is denoted by $\Lambda_+$ and equals 
\[
\Lambda_+ = \overline{\Lambda_{++}} = \{x \in \RR^{n} \mymid \lambda_1(x) \geq 0\}.
\]

\new{As mentioned in the introduction, we work over the affine section 
\[
\Seclambda_+=\Lambda_+\cap L_e,\qquad\text{where }
L_e = \{x \in \RR^n \mymid e^Tx = 1\}.
\]}
Note that the relative interior of $\new{\Seclambda_+}$ in $L_e$ is not empty, since, for instance, it contains $\frac{e}{\|e\|^2}$.
We call the affine section $\new{\Seclambda_+}$ the {\it standard section} since it preserves the multiplicity structure of the cone
$\Lambda_+$ (see Remark \ref{remarknice}).}

Since linear (LP) and semidefinite programming (SDP) are special instances of hyperbolic programming,
we recall the description of their feasible cones in this setting. For LP, the polynomial splits into real linear
factors, that is $f = \ell_1 \cdots \ell_d$, with $\ell_i \in \RR[x]_1$. For all $e \in \RR^{n}$ with $\ell_i(e)>0,
i=1, \ldots, d$,  one has that
\[
\Lambda_{+}\left(f, e\right)=\left\{ x \in \RR^{n} \mymid \forall\,i \,\,\, \ell_i(x) \geq 0\right\}
\]
is a polyhedron.
For SDP, the hyperbolicity cone of $f = \det(x_1A_1+\cdots+x_nA_n)$ in direction
$e$, with $A_i$ real symmetric, and $e_1A_1+\cdots+e_nA_n \succ 0$,
equals the cone of positive semidefinite matrices in the subspace
spanned by
$A_1,\dots,A_n$, that is, a spectrahedral cone.

\section{Multiplicities}
\label{sec:multiplicity}

Let $a \in \RR^{n}$, and $f \in \RR[x]$ a hyperbolic polynomial
(with respect to a fixed point $e\in\RR^{n}$) of degree $d$. The
{\it multiplicity} of $a$ is the multiplicity of $t = 0$ as an
eigenvalue of $a$, hence as a root of the characteristic polynomial
$\ch_a(t)=f(te-a)$, and is denoted by $\mult({a})$. We consider the
semialgebraic sets
\begin{align*}
\partial^{m}\Lambda_+ & = \{a \in \Lambda_+ \mymid \mult({a}) = m\}, \\
\partial^{\geq m}\Lambda_+ & = \partial^{m}\Lambda_+ \cup \partial^{m+1}\Lambda_+ \cup \cdots \cup \partial^{d}\Lambda_+,
\end{align*}
for $0 \leq m \leq d$, first considered in \cite{renegar2006hyperbolic}. \new{By \cite[Prop.22]{renegar2006hyperbolic},
the function $a \mapsto \mult(a)$ is independent of the hyperbolic direction $e$ chosen within $\Lambda_+(f,e)$.}
The following lemmas give basic results on the
multiplicity structure of the boundary of $\Lambda_+$ and on its algebraic closure. These will allow us to define
explicit equations for algebraic relaxations of $\Lambda_+$.

\begin{lemma}
\label{lemmaclosure}
For $0 \leq m \leq d$, the set $\partial^{\geq m}\Lambda_+$ is closed and $\overline{\partial^{m}\Lambda_+} \subset \partial^{\geq m}\Lambda_+$.
\end{lemma}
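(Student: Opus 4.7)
The plan is to reduce the statement to the elementary observation that ``multiplicity of $t=0$ as a root of a polynomial is at least $m$'' is a Zariski-closed condition on the coefficients.

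First I would introduce the coefficient expansion
\[
\ch_a(t) = f(te - a) = \sum_{k=0}^{d} c_k(a)\, t^k,
\]
where each $c_k$ is a polynomial in $a\in\RR^n$ (obtained by collecting powers of $t$ in $f(te-a)$). By the very definition of $\mult(a)$, we have $\mult(a)\geq m$ if and only if $c_0(a) = c_1(a) = \cdots = c_{m-1}(a) = 0$. Therefore the set
\[
W_m = \{a \in \RR^n \mymid \mult(a) \geq m\} = \realset{c_0,\ldots,c_{m-1}}
\]
is Zariski closed, and in particular closed in the Euclidean topology.

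Next, I would observe that
\[
\partial^{\geq m}\Lambda_+ \;=\; \Lambda_+ \cap W_m,
\]
because a point $a\in\Lambda_+$ belongs to $\partial^{\geq m}\Lambda_+$ exactly when its multiplicity lies in $\{m,m+1,\ldots,d\}$, i.e.~when $\mult(a)\geq m$. Since $\Lambda_+$ is closed (it is the Euclidean closure of $\Lambda_{++}$, as stated before the lemma) and $W_m$ is closed, the intersection $\partial^{\geq m}\Lambda_+$ is closed. This proves the first assertion.

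For the second assertion, note the trivial inclusion $\partial^{m}\Lambda_+ \subset \partial^{\geq m}\Lambda_+$. Taking Euclidean closures and using that $\partial^{\geq m}\Lambda_+$ is already closed by the first part gives
\[
\overline{\partial^{m}\Lambda_+} \subset \overline{\partial^{\geq m}\Lambda_+} = \partial^{\geq m}\Lambda_+,
\]
as claimed. There is no real obstacle here: the only point that requires any justification is the algebraic characterization of $\mult(a)\geq m$ via vanishing of the low-order coefficients of $\ch_a$, and everything else is formal.
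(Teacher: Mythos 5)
Your proof is correct, but it follows a genuinely different route from the paper's. The paper proves closedness of $\partial^{\geq m}\Lambda_+$ analytically, via continuity of the ordered eigenvalue functions $a\mapsto\lambda_i(a)$: it takes a convergent sequence in $\partial^{\geq m}\Lambda_+$, observes that the first $m$ eigenvalues vanish along the sequence while the rest are nonnegative, and passes to the limit. You instead observe that $\{a:\mult(a)\geq m\}$ is cut out by the vanishing of the low-order coefficients $c_0,\ldots,c_{m-1}$ of $\ch_a(t)$ (which are polynomials in $a$), hence is a real algebraic, in particular Euclidean-closed, set, and then intersect with the closed cone $\Lambda_+$. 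Your argument is essentially the content of the paper's subsequent Lemma 2.2 (where $\Gamma_m$ is shown to be real algebraic via exactly this coefficient condition) applied to deduce Lemma 2.1; the paper presents these in the opposite order, with an independent analytic proof of closedness. Your approach buys simplicity and generality: it does not invoke continuity of the ordered real roots, which is itself a nontrivial consequence of hyperbolicity, and it would remain valid even without the hyperbolicity hypothesis so long as $f(e)\neq 0$. The paper's approach, by contrast, keeps Lemma 2.1 self-contained before the algebraic description of $\Gamma_m$ is introduced, and the eigenvalue-continuity argument is reused in the later, harder Proposition on maximal multiplicity, so there is a pedagogical reason for the paper's ordering.
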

\begin{proof}
The inclusion ${\partial^{m}\Lambda_+} \subset \partial^{\geq
  m}\Lambda_+$ holds by definition. Hence we have to show that
$\partial^{\geq m}\Lambda_+$ is closed, which comes simply from the continuity of eigenvalues $a \mapsto \lambda_i(a)$.
Indeed, if $(a_{\epsilon})_{\epsilon > 0} \subset \partial^{\geq m}\Lambda_+$, then for all $\epsilon > 0$
\[
\lambda_1(a_\epsilon) = \cdots = \lambda_m(a_\epsilon) = 0 \leq \lambda_{m+1}(a_\epsilon) \leq  \cdots \leq \lambda_{d}(a_\epsilon).
\]
Thus if $a_\epsilon \xrightarrow[]{\epsilon \rightarrow 0^+} a$, then $\lambda_1(a) = \cdots \lambda_m(a) = 0 \leq \lambda_{m+1}(a)
\leq  \cdots \leq \lambda_{d}(a)$, hence $\mult({a}) \geq m$ as claimed.
\end{proof}

Note that, typically, neither $\partial^{m}\Lambda_+$ nor
$\partial^{\geq m}\Lambda_+$ are Zariski closed sets, since they are
semialgebraic rather than algebraic. Often, in order to develop
algebraic techniques, it is desirable to work with real algebraic
sets. We therefore define the following:
\begin{equation*}
\Gamma_m = \left\{a \in \RR^{n} \mymid \mult({a}) \geq m \right\}, \qquad 0 \leq m \leq d.
\end{equation*}
The sets $\Gamma_m$ define a nested collection in $\RR^{n}$:
\[
\Gamma_d \subset \Gamma_{d-1} \subset \cdots \subset \Gamma_1 = \{a \in \RR^{n} \mymid f(a)=0 \} = \realset{f} \subset \Gamma_0 = \RR^{n}.
\]
By Lemma \ref{lemmaclosure}, we have $\overline{\partial^m\Lambda_{+}} \subset \Gamma_m \cap \Lambda_{+}$ (indeed, $\Gamma_m \cap \Lambda_{+} =
\partial^{\geq m}\Lambda_+$).

\begin{lemma}
\label{lemma:eqLambda}
For any $0 \leq m \leq d$, the set $\Gamma_m$ is real algebraic and
satisfies
\[
\zar{\partial^m\Lambda_+}=\zar{\overline{\partial^m\Lambda_+}} \subset
\Gamma_m.
\]
\end{lemma}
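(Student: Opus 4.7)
The plan is to handle the three parts of the lemma in turn: that $\Gamma_m$ is real algebraic, that the two Zariski closures agree, and that both are contained in $\Gamma_m$.

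First I would expand the characteristic polynomial as a polynomial in $t$:
\[
\ch_a(t) = f(te-a) = \sum_{k=0}^d c_k(a)\, t^k,
\]
where each $c_k \in \RR[x]$ is a polynomial in $a$. Concretely, by Taylor expansion in $t$ at $t=0$, one has $c_k(a) = \frac{1}{k!}D_e^k f(-a)$, where $D_e$ denotes the directional derivative in direction $e$; since $f$ is a polynomial, each $D_e^k f$ is a polynomial and hence $c_k$ is polynomial in the entries of $a$. The condition that $t=0$ is a root of $\ch_a$ of multiplicity at least $m$ is exactly $c_0(a)=\cdots=c_{m-1}(a)=0$. Therefore
\[
\Gamma_m \;=\; \bigl\{a\in\RR^n \mymid c_0(a)=\cdots=c_{m-1}(a)=0\bigr\}
\]
is the real zero set of $m$ real polynomials, hence real algebraic.

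For the chain of inclusions, the equality $\zar{\partial^m\Lambda_+}=\zar{\overline{\partial^m\Lambda_+}}$ is a general topological fact. Since every Zariski closed subset of $\RR^n$ is also Euclidean closed, the Zariski closure $\zar{\partial^m\Lambda_+}$ contains $\partial^m\Lambda_+$ and is Euclidean closed, so it must contain $\overline{\partial^m\Lambda_+}$. Taking Zariski closure preserves this inclusion and yields $\zar{\overline{\partial^m\Lambda_+}}\subset\zar{\partial^m\Lambda_+}$. The reverse inclusion is immediate from $\partial^m\Lambda_+\subset\overline{\partial^m\Lambda_+}$.

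Finally, the inclusion in $\Gamma_m$ follows by combining the two steps. By definition $\partial^m\Lambda_+\subset\Gamma_m$, and by the first step $\Gamma_m$ is Zariski closed, so $\zar{\partial^m\Lambda_+}\subset\Gamma_m$, and by the second step this is the same as $\zar{\overline{\partial^m\Lambda_+}}$. The only real content here is the first step: once one has an explicit presentation of the multiplicity locus as a polynomial zero set in the coordinates of $a$, the remaining inclusions are formal. I would expect the sole potential obstacle to be a careful justification that the coefficient functions $c_k(a)$ are genuinely polynomial in $a$, which follows directly from substituting $x=te-a$ into $f$ and collecting powers of $t$.
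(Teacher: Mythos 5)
Your proof is correct and takes essentially the same approach as the paper: both identify $\Gamma_m$ as the real zero set of the $m$ lowest coefficients of $\ch_a(t)$, note these are polynomial in $a$, and then obtain the inclusion by the formal observation that Zariski closures are Euclidean closed and $\Gamma_m$ is Zariski closed. The only cosmetic difference is that the paper cites Lemma~\ref{lemmaclosure} to pass through $\partial^{\geq m}\Lambda_+$, whereas you use the definitional inclusion $\partial^m\Lambda_+\subset\Gamma_m$ directly, which is equally valid.
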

\begin{proof}
The equality $\zar{S}=\zar{\overline{S}}$ always holds, since the Zariski topology is coarser than the Euclidean topology.
The inclusion $\zar{\overline{\partial^m\Lambda_+}}
\subset \zar{\Gamma_m}$ follows from $\overline{\partial^m\Lambda_{+}} \subset \Gamma_m \cap \Lambda_{+}$, proved
in Lemma \ref{lemmaclosure}. Hence we only need to prove that $\Gamma_m$ is real algebraic. Writing
\[
\ch_x(t) = t^d + g_{1}(x) t^{d-1} + \cdots + g_{d-1}(x) t + g_d(x),
\]
we deduce that a point $a\in\RR^{n}$ lies in $\Gamma_m$ if and only
if $t^m$ divides $\ch_a(t)$. This is the case if and only if all coefficients $g_d = f(-x), g_{d-1}, \ldots, g_{d-m+1}\in\RR[x]$ vanish
at $a$.
\end{proof}

The defining equations for $\Gamma_m$ obtained in the proof can be
made more explicit. 
Let us consider the {\it modified} characteristic polynomial $\ch_{-a}=f(t e + a)$
at a point $a \in \RR^{n}$, with respect to $f$ and $e$.
Denote by $\sigma_i(y_1, \ldots, y_d) = \sum_{j_1 < \cdots
  < j_i} y_{j_1}\cdots y_{j_i}$ the $i$-th elementary symmetric polynomial on variables
$y_1, \ldots, y_d$, then
\[
\ch_{-a}= t^d + \sigma_1(\lambda(a))t^{d-1} + \cdots + \sigma_{d-1}(\lambda(a))t + f(a)
\]
where $\lambda(a) = (\lambda_1(a),\ldots,\lambda_d(a))$ are the
eigenvalues of $a$. Note that $f(x) = \sigma_d(\lambda(x))=\lambda_1(x)\cdots \lambda_d(x)$,
since $f(e)=1$.

\begin{corollary}
\label{eqGamma}
For $0 \leq m \leq d$, we have $\Gamma_m=\realset{f,\sigma_{d-1}(\lambda),\ldots,\sigma_{d-m+1}(\lambda)}$.
\end{corollary}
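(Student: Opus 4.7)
The plan is to combine Lemma \ref{lemma:eqLambda} with the explicit expansion of the modified characteristic polynomial $\ch_{-a}$ displayed just above the statement. The bridge between the two is the observation that the multiplicity of $t=0$ as a root of $\ch_a(t)=f(te-a)$ agrees with its multiplicity as a root of $\ch_{-a}(t)=f(te+a)$: since $f$ is homogeneous of degree $d$, one has $\ch_{-a}(t) = (-1)^d \ch_a(-t)$, so the roots of $\ch_{-a}$ are precisely $-\lambda_1(a),\dots,-\lambda_d(a)$, and hence $\ch_{-a}(t) = \prod_{i=1}^d (t + \lambda_i(a))$. In particular, $t=0$ appears with the same multiplicity in both polynomials, so $a\in\Gamma_m$ if and only if $t^m$ divides $\ch_{-a}(t)$.

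Using the displayed expansion
\[
\ch_{-a}(t) = t^d + \sigma_1(\lambda(a))\,t^{d-1} + \cdots + \sigma_{d-1}(\lambda(a))\,t + f(a),
\]
the divisibility $t^m \mid \ch_{-a}(t)$ is equivalent to the vanishing of the $m$ lowest-order coefficients, i.e.
\[
f(a) = \sigma_{d-1}(\lambda(a)) = \cdots = \sigma_{d-m+1}(\lambda(a)) = 0.
\]
This is exactly the defining system of the claimed real algebraic set. Each $\sigma_i(\lambda(x))$ is a genuine polynomial in $x$, being (up to sign) a coefficient of $\ch_x$, which depends polynomially on $x$, so the right-hand side of the statement is well-defined as a real zero set in $\RR^n$.

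In this sense the corollary is essentially a reformulation of the coefficient criterion already used in the proof of Lemma \ref{lemma:eqLambda}; the only subtle point is the sign bookkeeping coming from passing between $\ch_a$ and $\ch_{-a}$, which is precisely what makes the elementary symmetric polynomials appear with positive signs in the expansion and identifies the constant term of $\ch_{-a}$ with $f(a)$ itself rather than with $(-1)^d f(a)$. I do not expect any real obstacle: once this sign point is handled, the equivalence between at least $m$ eigenvalues vanishing and the vanishing of the last $m$ symmetric functions of the eigenvalues is immediate from unique factorization of $\ch_{-a}$ in $\RR[t]$.
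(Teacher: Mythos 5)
Your proof is correct and takes essentially the same approach as the paper: both use the criterion that $a\in\Gamma_m$ iff $t^m$ divides the characteristic polynomial, translated into vanishing of the trailing coefficients. The paper passes from $\ch_x$ to the $\sigma_i(\lambda)$ by the substitution $x\mapsto -x$ together with a final homogeneity observation, whereas you pass from $\ch_a$ to $\ch_{-a}$ by the substitution $t\mapsto -t$ and note that the order of vanishing at $t=0$ is unchanged; this is a slightly more direct bookkeeping that dispenses with the homogeneity step at the end, but the underlying argument is the same.
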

\begin{proof}
Indeed, in the proof of Lemma \ref{lemma:eqLambda} we have shown that
$\Gamma_m$ equals the real algebraic set $\realset{g_d, g_{d-1}, \allowbreak \ldots, \allowbreak g_{d-m+1}}$,
with $g_d:=f(-x)$ and $g_i(x)$ is the coefficient of $t^{d-i}$ in $\ch_x(t)$.
Therefore, $g_{i}(-x) = \sigma_{i}(\lambda(x))$
for $i=1,\ldots, d$ and hence
$$
\Gamma_m = \realset{f(-x) , \sigma_{d-1}\bigl(\lambda(-x)\bigr),\ldots,\sigma_{d-m+1}\bigl(\lambda(-x)\bigr)}.
$$
The claim follows from the homogeneity of $f,\sigma_{d-1}(\lambda),\ldots,\sigma_{d-m+1}(\lambda)$.
\end{proof}


\new{
\begin{remark}
\label{remarknice}
\emph{Suppose that $a \in \Lambda_+$ with $e^Ta \neq 0$. Then $x = \frac{a}{e^Ta} \in \new{\Seclambda_+}$ and
$\mult(x) = \mult(a)$. We deduce that if $\Lambda_+ \cap \{x \in \RR^n \mymid e^Tx=0\} = \{0\}$, the standard section
of a hyperbolicity cone is a \emph{base} for $\Lambda_+$ (as a convex cone; see \cite[Def. 8.3]{barvinok}) and has the same multiplicity
structure as $\Lambda_+ \setminus \{0\}$.}
\end{remark}
}

We are particularly interested in computing the \emph{maximum multiplicity} on the \new{standard section of the }cone $\Lambda_{+}$. For a hyperbolic polynomial $f \in \RR[x]_d$, we define the integer
\[
\max_{a \in \new{\Seclambda_+}} \mult({a}) = \max_{0 \leq t \leq d} \{t \mymid \new{\Secgamma_t} \cap\Lambda_+ \neq \emptyset\}.
\]
This is well defined since $\new{\Seclambda_+} \neq \emptyset$ and $0 \leq \mult({a}) \leq d$ for all $a \in \RR^{n}$.
We now show the maximum multiplicity $m$ is attained on an entire real connected component of $\new{\Secgamma_m}$.

\begin{proposition}
\label{prop-component}
Let $f \in \RR[\new{x}]_d$ be hyperbolic with respect to $e \in \RR^{n}$, and let $m=\max_{a \in \new{\Seclambda_+}} \mult({a})$. For every (real)
connected component $\cc$ of $\new{\Secgamma_m}$, with $\cc \cap \Lambda_+ \neq \emptyset$, we have
\begin{enumerate}
\item[(1)]
  $\cc \subset \Lambda_+$
\item[(2)]
  $\cc \cap \Gamma_{m+1} = \emptyset$.
\end{enumerate}
\end{proposition}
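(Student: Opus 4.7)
The plan is to reduce (2) to (1) using the definition of $m$, and to prove (1) by a connectedness argument. Setting $U := \cc \cap \Lambda_+$, I observe that $U$ is closed in $\cc$ (since $\Lambda_+$ is Euclidean-closed) and nonempty by hypothesis; by connectedness of $\cc$ it then suffices to show $U$ is also open in $\cc$, whence $U = \cc$. Once (1) is known, (2) is immediate: if $a \in \cc \cap \Gamma_{m+1}$ then by (1) we have $a \in \Lambda_+$, and $a \in L_e$ because $\cc \subset \Secgamma_m \subset L_e$, so $a \in \Seclambda_+$ with $\mult(a) \geq m+1$, contradicting the definition $m = \max_{\Seclambda_+}\mult$.

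For the openness step, fix $a \in U$. Since $a \in \Seclambda_+ \cap \Gamma_m$, the maximality of $m$ forces $\mult(a) = m$ exactly, so the ordered eigenvalues of $a$ satisfy
\[
\lambda_1(a) = \cdots = \lambda_m(a) = 0 < \lambda_{m+1}(a) \leq \cdots \leq \lambda_d(a).
\]
By continuity of the ordered eigenvalue maps $\lambda_i \colon \RR^n \to \RR$ (which follows from the coefficients of $\ch_x(t)$ being polynomial in $x$ together with continuity of roots of real-rooted monic polynomials), there is a Euclidean neighborhood $W$ of $a$ on which $\lambda_{m+1}$ remains strictly positive. For any $a' \in W \cap \cc$, the condition $a' \in \Gamma_m$ says that the zero eigenvalues of $a'$ form a consecutive block of length at least $m$ in the ordered list; the strict inequality $\lambda_{m+1}(a') > 0$ forces this block to be exactly $\lambda_1(a'),\ldots,\lambda_m(a')$, giving $\lambda_1(a') = 0$ and hence $a' \in \Lambda_+$. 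Therefore $W \cap \cc \subset U$, as required.

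The main obstacle is the openness step. Its subtlety lies in the need for the \emph{exact} equality $\mult(a) = m$ at points of $U$ (rather than merely $\mult(a) \geq m$) in order to secure the strict gap $\lambda_{m+1}(a) > 0$. Without that gap, an infinitesimal movement along $\cc$ could easily produce a negative eigenvalue while preserving $\mult \geq m$, thereby escaping $\Lambda_+$. This is precisely where the maximality hypothesis on $m$ enters essentially.
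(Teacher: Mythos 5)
Your proof is correct, and it takes a cleaner route than the paper's. The paper establishes (1) by the same basic ingredients (continuity of ordered eigenvalues, maximality of $m$), but organizes them differently: it invokes a continuous \emph{path} $\varphi\colon[0,1]\to\cc$ from a feasible point $a$ to a hypothetical infeasible point $b$, locates the last parameter $t_0$ at which the path is still feasible, and then passes to the limit $a_\epsilon=\varphi(t_0+\epsilon)\to a_0=\varphi(t_0)$ to force $m+1$ vanishing eigenvalues at $a_0$. That argument implicitly relies on the fact that a connected component of a real algebraic set is path-connected — true, but a nontrivial input. Your open-and-closed argument avoids path-connectedness entirely: $U=\cc\cap\Lambda_+$ is closed in $\cc$ since $\Lambda_+$ is closed, and you show it is open by exhibiting, around each $a\in U$, a neighborhood $W$ on which $\lambda_{m+1}>0$; for $a'\in W\cap\cc\subset\Gamma_m$ the block of $\geq m$ zero eigenvalues then has nowhere to sit except positions $1,\dots,m$, forcing $\lambda_1(a')=0$ and $a'\in\Lambda_+$. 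The key step — using maximality to get the strict gap $\lambda_{m+1}(a)>0$ — is exactly the same mechanism the paper exploits via the limit, but you deploy it pointwise rather than along a path. Your treatment of (2) as a consequence of (1) matches the paper's. Net effect: same mathematical content, but your version is more self-contained and slightly more elegant, at no cost in rigor.
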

\begin{proof}
First, note that (1) implies (2), because, by definition, $m$ is the
maximal multiplicity of points in $\Lambda_+$.

To prove (1), let $\cc$ be a connected component of $\new{\Secgamma_m}$ intersecting $\Lambda_+$, and let $a \in \cc \cap \Lambda_+$.
Note that $\mult({a}) = m$ by maximality of $m$. Suppose that there is $b
\in \cc \setminus \Lambda_+$; hence $\mult({b}) \geq m$ and $\lambda_1(b) < 0$. Since $\cc$ is
connected,
there is a continuous path $\varphi \colon [0,1] \rightarrow \cc$ with $\varphi(0)=a$ and $\varphi(1)=b$.
For all $t \in [0,1]$, $\mult({\varphi(t)}) \geq m$, and there exists $t_0 \in [0,1]$ such that
\begin{itemize}
\item[(a)]
  $\varphi([0,t_0]) \subset \Lambda_+$ and
\item[(b)]
  $\exists\,\delta > 0$ such that, $\forall\,0<\epsilon<\delta$, $\varphi(t_0+\epsilon) \notin \Lambda_+$. 
\end{itemize}
Define $a_0=\varphi(t_0)$ and $a_\epsilon=\varphi(t_0+\epsilon),$ for $0<\epsilon <\delta$, so that $a_\epsilon
\xrightarrow[]{\epsilon \rightarrow 0^+} a_0$. Since $a_\epsilon \notin \Lambda_+$, then
$\lambda_1(a_\epsilon)<0$ for all $0 < \epsilon < \delta$. More precisely, since $\mult({{a_\epsilon}}) \geq m$, then
for all $\epsilon > 0$ there exists $i(\epsilon) \in \{1, \ldots, d\}$ such that
\begin{align*}
\lambda_1(a_\epsilon) \leq \cdots \leq \lambda_{i(\epsilon)}(a_\epsilon) & < 0 \\
\lambda_{i(\epsilon)+1}(a_\epsilon) = \cdots = \lambda_{i(\epsilon)+m}(a_\epsilon) & = 0 \\
0 \leq \lambda_{i(\epsilon)+m+1}(a_\epsilon) \leq \cdots  & \leq \lambda_d(a_\epsilon).
\end{align*}
Passing to the limit for $\epsilon \rightarrow 0^+$, by the continuity of the eigenvalues and since $a_0 \in \Lambda_+$,
we find that $\lambda_1(a_0) = \cdots = \lambda_{m+1}(a_0)=0$, that is $\mult({a_0}) \geq m+1$,
which contradicts the fact that $m$ is the maximum multiplicity.
\end{proof}

Proposition \ref{prop-component} provides us with a way of computing the largest multiplicity on the \new{standard section
$\new{\Seclambda_+}$ of the} hyperbolicity cone $\Lambda_+$, and of representing one point where this maximum value is attained.
More precisely, consider the {\it non-convex} optimization problem
\begin{equation}
\label{maxmult}
 \boxed{
\begin{aligned}
\max & \,\,\, \mult({x}) \\
 s.t. & \,\,\, x \in \new{\Seclambda_+(f,e)}
\end{aligned}
 }
\end{equation}
By the hyperbolicity of $f$, \new{the interior of the feasible set} $\Lambda_{+}$ is $\Lambda_{++}
\neq \emptyset$ \new{(for instance, $e \in \Lambda_{++}$)}, hence this problem is feasible \new{(and with
non-empty interior)}. Hence there always exists $a^* \in \new{\Seclambda_+}$ such that
$\mult({a^*}) = m := \max_{a \in \new{\Seclambda_+}} \mult({a})$. By Proposition \ref{prop-component}, the whole
connected component $\cc^*$ of $\new{\Secgamma_m}$ containing $a^*$, is included in $\Lambda_+$.

\new{First, by Remark \ref{remarknice}, under the assumption that $\Lambda_+$ intersects
$\{x \in \RR^n \mymid e^Tx=0\}$ only in $0$, we conclude that a solution of Problem \eqref{maxmult}
yields the maximum multiplicity over $\Lambda_+ \setminus \{0\}$.} We also conclude by applying \new{Proposition
\ref{prop-component}}, that Problem
\eqref{maxmult} can be solved by computing at least one point per connected component of the real algebraic
sets $\new{\Secgamma_m}, m=1,\ldots,d-1$. This is a central
routine in computational algebraic geometry, for which exact algorithms have been designed,
see {\it e.g.}~\cite{basu1996combinatorial,RENEGAR1992255} or the monograph
\cite{BPR98} with its references. By exact representation, we mean via a rational univariate
representation \cite{Rouillier99}: this is a vector $(q,q_0,q_1,\ldots,q_{n}) \in \QQ[t]^{n+2}$
such that the set
\begin{equation}
\label{RUR}
\left\{ \left( \frac{q_1(t)}{q_0(t)}, \ldots, \frac{q_{n}(t)}{q_0(t)} \right) \mymid q(t)=0 \right\}
\end{equation}
intersects every connected component of the given algebraic set. In \eqref{RUR}, $q$ and $q_0$
are coprime, therefore the set is well defined and finite. Its cardinality is bounded above by
$\deg\,q$.

The best arithmetic complexity bounds for computing representations as in \eqref{RUR} are essentially polynomial in
the number of equations defining the algebraic set (which for $\new{\Secgamma_m}$ is at most $m\new{+1}$,
by Corollary \ref{eqGamma}) and in the maximum of their degree (bounded above by $d$) and singly
exponential in the number of variables. These come out of the so-called critical
points method \cite[Ch.\,16]{BPR98}, or the effective theory of polar varieties
\cite{SaSc03,SaSc04}.

In this paper we are not focusing on complexity results for hyperbolic programming, but our
efforts are especially devoted to the design of algebraic methods for this problem.
However, Proposition \ref{prop-component} and the mentioned results, according to the previous
complexity analysis, give a singly exponential algorithm to represent a solution to Problem
\ref{maxmult} in exact arithmetic, which is worth being highlighted.



We conclude this section with an example and comment on computational issues of Problem \ref{maxmult} for the case of
the elliptope. This is the feasible set of the SDP-relaxation of the MAX-CUT combinatorial
optimization problem \cite{goemans}.

\begin{example}[Elliptope]
\label{ellipt}
Let $d \in \NN$ and $n = \binom{d}{2}$, and let ${\scrE}_d$ be the $d$-elliptope. This is the spectrahedral cone of dimension
$n+1$ defined by the linear matrix inequality $A(x) \succeq 0$ with
\[
A(x)=
\left[
\begin{array}{cccc}
x_0    & x_{1,2} & \cdots & x_{1,d} \\
x_{1,2} &   x_0 & \ddots  & \vdots \\
\vdots & \ddots &   \ddots & x_{d-1,d} \\
x_{1,d} & \cdots &   x_{d-1,d} & x_0
\end{array}
\right].
\]
That is ${\scrE}_d = \{x=(x_0,x_{1,2},\ldots,x_{d-1,d}) \in \RR^{n+1} \mymid A(x) \succeq 0\}$.
It is the linear section of codimension $d-1$ of the cone of $d \times d$ real symmetric
positive semidefinite matrices whose main diagonal is constant. Every matrix in ${\scrE}_d$
is also called a correlation matrix (see \cite{LAURENT1995439} and references therein).
\new{Note that $e=(1,0,\ldots,0)$ (corresponding to the identity matrix $A(e) = \Id_d$) is in the
interior of ${\scrE}_d$ and that $L_e = \{x \in \RR^{n+1} \mymid x_0=1/d\}$.}
\end{example}

We propose two distinct tests on Example \ref{ellipt}.
The results that we present have been obtained on a desktop PC, with CPU architecture with the
following characteristics: {\tt Intel(R) Xeon(R) CPU E5-4620 0 @ 2.20GHz}. For the sake of
reproducibility, the corresponding
Maple scripts are made available on the webpage of the first
author\footnote{\url{www.unilim.fr/pages_perso/simone.naldi/software.html}}.

\begin{exper}
\label{exper:1}
\new{Since we work in the affine space $L_e$ defined in Example \ref{ellipt}, we} put $\new{x_0=1/d}$. We recall that
$\det A$ is hyperbolic with respect to $e = (1,0,\ldots,0)$.
The authors of \cite{LAURENT1995439} proved that the vertices of ${\scrE}_d$ (defined as those boundary
points whose normal cone is full-dimensional) are characterized as all the rank one matrices in $A(x)$
with entries in ${\pm 1}$. These are exactly the connected components \new{(in this case, isolated real points) of
$\Gamma_{d-1}$, that is $\Gamma_{d-1} \subset {\scrE}_d$, and each rank one matrix in $\Gamma_{d-1}$ maximizes the
multiplicity on ${\scrE}_d \setminus \{0\}$: indeed, $0$ is the unique positive semidefinite matrix of trace $0$,
hence the assumption in Remark \ref{remarknice} is satisfied. If $M$ is one of these rank one matrices, then $(1/d)M \in {\scrE}_d \cap
L_e$ maximizes the multiplicity on ${\scrE}_d \cap L_e$}.

These points can be computed efficiently in practice. We make use of the Maple library {\sc spectra}
\cite{spectra}, which is targeted to computing low rank solutions of linear matrix inequalities. As explained
in \cite{spectra}, the command {\tt SolveLMI(A,\{all\},[1])} (when called in a Maple worksheet where {\sc spectra}
has been previously loaded, and where the variable {\tt A} is instantiated to $A(x)$\new{, with $x_0=1/d$})
computes all components with highest multiplicity, namely isolated matrices of rank $1$.

To give an idea of performances, we are able to solve our problem for $d \leq 5$ in less than
half second, or for $d = 8$ (corresponding to an elliptope of dimension 21, and computing $2^7 = 128$ solutions)
in around 2.5 minutes. \hfill $\blacksquare$
\end{exper}

A second test is performed for the elliptope, without exploiting the spectrahedrality of ${\scrE}_d$,
but just relying on the hyperbolicity of $\det A$.

\begin{exper}
\label{exper:2}
We generate the sets $\Gamma_m$, $m=1, \ldots, d$, as the zero locus of the polynomials defined in Corollary \ref{eqGamma}.
For every $m$, we sample the real connected components of $\new{\Secgamma_m}$ and check how many of the solutions lie in
$\scrE_d$, using the Maple library {\sc raglib} \cite{raglib}. The function {\tt PointsPerComponents} allows us to sample the
real connected components of the sets $\new{\Secgamma_m}$, $m=1, \ldots, d$, computing rational parametrizations as in
\eqref{RUR}. The goal is to sample many points on the boundary of ${\scrE}_d$, possibly with different multiplicities. Note
that, \emph{a priori}, Proposition \ref{prop-component} guarantees that this method yields at least one feasible point for
the maximum multiplicity \new{in $\scrE_d \cap L_e$}.

The results are summarized in \new{Table \ref{tab:exp8}}.
\begin{table}[!ht]
\begin{tabular}{|cccccc|}
\hline
$d$ & $m$ & \# samples & $\#$ feasible & $\text{mult} = m$ & CPU time \\ 
\hline
$2$ & $1$ & 2 & 2 & 2 & 0.07 s \\ 
\hline
$3$ & $1$ & 10 & 4 & 0 & 0.7 s \\ 
$3$ & $2$ & 4 & 4 & 4 & 5.1 s \\ 
\hline
$4$ & $1$ & 36 & 10 & 6 & 26 s \\ 
$4$ & $2$ & 36 & 36 & 30 & 24 s \\ 
$4$ & $3$ & \new{8} & 8 & 8 & 34 s \\ 
\hline
\end{tabular}
\caption{Sample points on the $d-$elliptope}
\label{tab:exp8}
\end{table}
For a fixed $d$ and for a given multiplicity $m = 1, \ldots, d-1$, the number of sample points
computed by {\sc raglib} is given in the third column, those lying on ${\scrE}_d\new{\cap L_e}$ in the fourth,
and those of the expected multiplicity in the fifth column; then, we report on the average time
on 1000 tries on our standard desktop PC.

We also note that one computes points with multiplicity which can be larger than the expected
one (that is, matrices whose rank is smaller than expected). For example, for $d=3$
and $m=1$, the four points on $\new{\Secgamma_1}\cap \scrE_d$ actually belong to $\Gamma_2 \subset
\Gamma_1$, and correspond to those solutions computed in the subsequent step $m=2$. Moreover,
in contrast with {\sc spectra} in Test \ref{exper:1}, one can even sample multiplicities smaller
than the maximal one, as for the case of the $4$-elliptope. Already for the $5$-elliptope, however, the
computation becomes quite prohibitive, which is coherent with the exponential arithmetic
complexity \new{of the algorithms implemented in {\sc raglib}}. \hfill $\blacksquare$
\end{exper}

In these tests, we have seen that points of maximum multiplicity can be computed efficiently
in practice using LMI exact solvers in the case of hyperbolic polynomials with determinantal representation.
On the other hand, larger multiplicities can be computed on general hyperbolicity cones $\Lambda_+(f,e)$
by sampling the loci $\new{\Secgamma_m}$, but with clear limitations in terms of the degree of $f$.


\section{Hyperbolic programming}
\label{sec:HP}

Hyperbolic programming is a convex optimization problem specified as follows. We are given a homogeneous
polynomial $f \in \RR[x]$ of degree $d$, with $x=(x_1,\ldots,x_n)$, hyperbolic with respect to $e \in \RR^{n}$, and
a linear map $\ell \colon \RR^{n} \to \RR$.

The {\it hyperbolic program} associated to data $(f,e,\ell)$ is
\begin{equation}
\label{HP}
 \boxed{
\begin{aligned}
\ell^* \, = \, \inf & \,\,\, \ell(x) \\
 s.t. & \,\,\, x \in \new{\Seclambda_+(f,e)}
\end{aligned}
 }
\end{equation}


\new{{\bf Assumption.} We assume, without loss of generality, that in Problem \eqref{HP} $\ell(x)$ and $e^Tx$ are independent linear forms. Indeed,
if $\ell(x) = \lambda e^Tx$ for some $\lambda \in \RR$ and for all $x \in \RR^n$, then $\ell(x)$ is constant and equal to $\lambda$
over the feasible set $\new{\Seclambda_+(f,e)}$, and the hyperbolic program is trivial.} \\

Since the objective function of Problem \eqref{HP} is linear and the feasible set is convex, when the infimum is
attained at $x^*$, then $x^* \in \boundary(\new{\Seclambda_+}) \new{\subset} \boundary{\Lambda_+}$. The boundary $\boundary{\Lambda_+}$
is defined, locally, by the coefficients of the modified characteristic polynomial $\ch_{-x}(t)$, as in Corollary \ref{eqGamma}. We now
formalize this relationship between solutions to Problem \eqref{HP} and multiplicity loci.



\medskip
A local minimizer of a continuous function $\ell \colon \RR^{n} \to \RR$ on a set $S \subset \RR^{n}$ is a point
$x^* \in S$ such that there exists an open set $U \subset \RR^{n}$ with $x^* \in U$ and $\ell(x^*) \leq \ell(x)$
for all $x \in U \cap S$. We can reduce Problem \eqref{HP} to the computation of local
minimizers on the multiplicity loci $\new{\Secgamma_m}$ as follows.

\begin{theorem}
\label{theo:reduction}
Let $f,e,\ell$ be the data defining Problem \eqref{HP}. Let $x^* \in \new{\Seclambda_+(f,e)}$ with $\ell(x^*)=\ell^*$, and let
$m^* = \emph{mult}({x^*})$. Then $x^*$ is a local minimizer of $f$ on $\new{\Secgamma_{m^*}}$.
\end{theorem}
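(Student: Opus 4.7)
The plan is to reduce the statement to a local containment: it suffices to exhibit an open neighborhood $U \subset \RR^n$ of $x^*$ with $U \cap \Secgamma_{m^*} \subset \Seclambda_+$. Given such a $U$, every $y \in U \cap \Secgamma_{m^*}$ lies in $\Seclambda_+$, so $\ell(y) \geq \ell^* = \ell(x^*)$ by definition of the infimum, which is exactly the local minimizer condition on $\Secgamma_{m^*}$. (I read the conclusion as local minimizer of $\ell$; writing $f$ would be trivial since $f$ vanishes on $\Gamma_{m^*}$ for $m^* \geq 1$.)

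I would first dispose of the trivial extremes. If $m^* = 0$, then $x^*$ lies in the relative interior of $\Seclambda_+$ in $L_e$, and a linear function attaining its minimum at a relative interior point of a convex set in $L_e$ must be constant there, forcing $\ell$ to be a scalar multiple of $e^T$, which contradicts the standing independence assumption. If $m^* = d$, then $\ch_{x^*}(t) = t^d$, and any $y \in \Gamma_d$ likewise has all eigenvalues zero, so $\Secgamma_d \subset \Seclambda_+$ holds trivially with no neighborhood argument needed.

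For the main case $1 \leq m^* < d$, the key input is continuity of the ordered eigenvalues. At $x^*$ one has
\begin{equation*}
0 = \lambda_1(x^*) = \cdots = \lambda_{m^*}(x^*) < \lambda_{m^*+1}(x^*) \leq \cdots \leq \lambda_d(x^*).
\end{equation*}
I would fix $\varepsilon$ with $0 < \varepsilon < \tfrac{1}{2}\lambda_{m^*+1}(x^*)$ and take $U$ on which $|\lambda_i(y) - \lambda_i(x^*)| < \varepsilon$ for every $i$. For $y \in U \cap \Secgamma_{m^*}$, the top $d - m^*$ eigenvalues $\lambda_{m^*+1}(y),\ldots,\lambda_d(y)$ remain strictly positive, so at most $m^*$ eigenvalues of $y$ can vanish; but $\mult(y) \geq m^*$ forces at least $m^*$ to vanish. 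Hence exactly $m^*$ eigenvalues of $y$ are zero and the remaining $d - m^*$ are positive, so $y \in \Lambda_+ \cap L_e = \Seclambda_+$, as required.

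The one point requiring care is the continuity of the ordered eigenvalues $\lambda_i$ as functions of $x$: these are not smooth at coalescence points, so the correct tool is continuity of the ordered roots of a monic polynomial with continuously varying coefficients, applied to $\ch_x(t) = f(te - x)$. Once this is available, the argument is essentially a local, multiplicity-specific refinement of Proposition \ref{prop-component}, in which the global maximality hypothesis on $m^*$ is traded for a sufficiently small neighborhood around $x^*$.
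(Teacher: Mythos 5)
Your proof is correct, and it takes a genuinely more direct route than the paper's. Both arguments ultimately rest on the same input --- continuity of the ordered eigenvalues of $\ch_y(t)=f(te-y)$ near $x^*$ --- but the paper runs a three-way case analysis: Case A (the component $\cc^*$ of $\Secgamma_{m^*}$ through $x^*$ is contained in $\Lambda_+$), Case B (some neighborhood of $x^*$ avoids $\realset{f}\setminus\Lambda_+$), and a residual case, which is argued to be impossible via a sequence in $\Secgamma_{m^*}\setminus\Seclambda_+$ converging to $x^*$. You skip the case distinction entirely by producing a single neighborhood $U$ with $U\cap\Secgamma_{m^*}\subset\Seclambda_+$, from which the local-minimality of $\ell$ at $x^*$ follows immediately. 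What this buys: the argument is shorter, uses only one application of eigenvalue continuity, and avoids a step in the paper that is passed over quickly (the claim that, in the residual case, \emph{every} neighborhood of $x^*$ meets $\Secgamma_{m^*}\setminus\Seclambda_+$, not merely $(\realset{f}\setminus\Lambda_+)\cap L_e$ --- this is asserted but not spelled out). Your reading of the conclusion as local minimality of $\ell$, not $f$, is clearly the intended one. One small spelling-out you could add: from ``exactly $m^*$ eigenvalues of $y$ vanish and the top $d-m^*$ are positive'' to ``$y\in\Lambda_+$'', the reason is that the $m^*$ vanishing eigenvalues must then occupy indices $1,\dots,m^*$ in the ordered list, so $\lambda_1(y)=0\geq 0$; equivalently, if $\lambda_1(y)<0$ there would be at most $m^*-1$ zero eigenvalues, contradicting $y\in\Gamma_{m^*}$.
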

\begin{proof}
Since we are minimizing a linear function over a non-empty convex set, the minimizer (if it exists) belongs to the
boundary of the feasible set, hence $m^*>0$. 

Next, we denote by $\cc^*$ the connected component of $\new{\Secgamma_{m^*}}$ containing $x^*$. First, we suppose that 
$\cc^* \not\subset \Lambda_+$ and that $W \new{\cap L_e} \cap(\realset{f} \setminus \Lambda_+) \neq \emptyset$ holds for every open set
$W\subset\RR^{n}$ containing $x^*$. We show that this situation cannot occur. 
Indeed, since $\cc^* \not\subset \Lambda_+$, we easily deduce $W \cap  (\new{\Secgamma_{m^*} \setminus \Seclambda_+})\neq \emptyset$
for all $W$ as above. Let $B_k$ be the open ball with center $x^*$
and radius $1/k$, for $k \in \NN \setminus \{0\}$. For all such $k$, we can choose $x(k) \in B_k\cap(\new{\Secgamma_{m^*}\setminus
\Seclambda_+})$, yielding a sequence $x(k) \xrightarrow[]{k \rightarrow
  \infty} x^*$. We deduce that $\lambda_1(x(k))<0$ holds for all $k$,
and hence $\lambda_{m^*+1}(x(k)) \leq 0$ (because at least $m^*$
eigenvalues of $x(k)$ must vanish). Passing to the limit, we find $\lambda_{m^*+1}(x^*) = 0$; since $x^* \in \new{\Seclambda_+}$, we get
$\lambda_j(x^*)=0$ for $j=1, \ldots, m^*+1$, which implies $\mult({x^*}) \geq m^*+1$, which is a contradiction.

Now, two cases remain to be analyzed:

{\it Case A} : $\cc^* \subset \Lambda_+$. Thus $\ell(x^*) \leq  \ell(x)$ for all $x \in \cc^*$, that is $x^*$ is a (global)
minimizer of $\ell$ on $\cc^*$, hence a local minimizer of $\ell$ on $\new{\Secgamma_{m^*}}$.

{\it Case B} : There is an open set $W \subset \RR^{n}$ with $x^* \in W$ and $W \cap (\realset{f} \setminus \Lambda_+)\new{\cap L_e}=
\emptyset$. In other words, $W$ meets $\realset{f}$ only at feasible points. Hence $x^*$ minimizes $\ell$ on $W \cap \new{\Secgamma_{m^*}}$,
hence it is a local minimizer of $\ell$ on $\new{\Secgamma_{m^*}}$.
\end{proof}

We now give a formal description of an \emph{algorithm} for Problem \eqref{HP}. The idea is to represent local minimizers
of the map $\ell$ on the set $\new{\Secgamma_m}$ via first-order conditions involving additional Lagrange multipliers. Let
$f_1,\ldots,f_m$ be the polynomials defining $\Gamma_m$ (see Corollary \ref{eqGamma}).
A local minimizer $x^*$ of $\ell$ on $\Gamma_m$ is encoded by the following system of equations:
\begin{equation}
\label{LagSyst}
\begin{aligned}
f_1 = 0, \,\,\, \ldots, \,\,\, f_{m} = 0, \new{e^Tx=1} \\
z_1 \nabla f_1 + \cdots + z_{m} \nabla f_m \new{+ z_{m+1}e} & = \nabla \ell 
\end{aligned}
\end{equation}
which means that there exists $z^* \in \RR^{m\new{+1}}$ such that $(x^*,z^*)$ satisfies system \eqref{LagSyst}. When the number
of singular points of the complex set $\{x \in \CC^n \mymid \forall\,i=1, \, f_i(x) = 0, \new{e^Tx=1}\}$ is finite, which turns out
to be often satisfied, the solutions of system \eqref{LagSyst} consist of these singular points and the smooth
minimizers of $\ell$. \new{Note that if $\ell(x)$ and $e^Tx$ are dependent linear forms, then $e$ and $\nabla\ell$ are multiples,
and hence system \eqref{LagSyst} has infinitely many solutions (all feasible points are critical). Our assumption that $\ell,e^Tx$ are
independent excludes this pathological situation}.

We suppose now that we are given a routine {\sf RP} that, when given
as input a zero-dimensional ideal $I \subset \RR[x,z]$, 
returns the rational parametrization \eqref{RUR} for the finite set $\zeroset{I \cap \RR[x]} \subset \CC^{n}$. Algorithms to compute such
parametrizations have appeared {\it e.g.}~in \cite{Rouillier99,GiLeSa01,SymbHom}. The routine {\sf LAG} is supposed to build the
system \eqref{LagSyst} from data $f,e,\ell,m$. The formal description of our algorithm is as follows:

\begin{algorithm}
\caption{\bf SolveHP}
\label{solvehp}
\begin{algorithmic}[1]
\Procedure{SolveHP}{$f,e,\ell$}
\State $L \leftarrow \{\,\}$
\For{$m=1,\ldots,d-1$}
\State $L \leftarrow L \cup {\sf RP}({\sf LAG}(f,e,\ell,m))$
\EndFor
\State \Return{$L$}
\EndProcedure
\end{algorithmic}
\end{algorithm}

The output of {\bf SolveHP} is a set of rational parametrizations. The union of their solutions contains the solution
to Problem \eqref{HP}, according to Theorem \ref{theo:reduction}.

\subsection{Examples}

We have implemented Algorithm \ref{solvehp} in Maple and used it to recover exact information on
examples from the literature.

\begin{example}[Quartic symmetroids]
We consider the list of nodal quartic symmetroids given in \cite{Ottem2015}. The authors of
\cite{Ottem2015} associate to every transversal quartic spectrahedron
$$
\spec = \{x = (x_0,x_1,x_2,x_3) \in \RR^4 \mymid A(x) \succeq 0\},
$$
\new{where $A(x)=x_0A_0+x_1A_1+x_2A_2+x_3A_3$, for some $A_i \in \ss_4(\RR)$},
a couple $(\rho,\sigma)$ of nonnegative integers, where $\rho$ corresponds to the number of
nodes (quadratic singularities) of the \new{real} projective hypersurface $\{x \in \PP^3(\RR) \mymid \det A(x)\new{=0}\}$, and
$\sigma \leq \rho$ is the number of nodes lying on $\partial \spec$. We denote the quartic
of type $(\rho,\sigma)$ in the list in \cite{Ottem2015} by $\spec_{\rho,\sigma}$.

The goal is to perform a random analysis on the solutions of SDP instances over these sets. While this
is similar to \cite[Table 2]{stu}, our exact viewpoint can certify the multiplicity at a
given solution; indeed, once the representation \eqref{RUR} is computed, isolating the
real roots of $q$ allows to compute the signs of the coefficients of the characteristic
polynomial $\det (t \Id-A(x))$ exactly, hence to decide feasibility and multiplicity.
The same is not possible with standard SDP solvers.
\begin{table}[!ht]
\begin{tabular}{|c|rr||c|rr|}
\hline
$\spec_{\rho,\sigma}$ & $m^*=1$ & $m^*=2$ & $\spec_{\rho,\sigma}$ & $m^*=1$ & $m^*=2$ \\
\hline
$\spec_{2,2}$ & { 98}\,\% & { 2}\,\% & $\spec_{4,0}$ & { 100}\,\% & { 0}\,\% \\
$\spec_{4,4}$ & { 62}\,\% & { 38}\,\% & $\spec_{6,2}$ & { 32}\,\% & { 68}\,\% \\
$\spec_{6,6}$ & { 58}\,\% & { 42}\,\% & $\spec_{8,4}$ & { 17}\,\% &  { 83}\,\% \\
$\spec_{8,8}$ & { 22}\,\% & { 78}\,\% & $\spec_{10,6}$ & { 7}\,\% &  { 93}\,\% \\
$\spec_{10,10}$ & { 75}\,\% & { 25}\,\% & $\spec_{6,0}$ & { 100}\,\% &  { 0}\,\% \\
$\spec_{2,0}$ & { 100}\,\% & { 0}\,\% & $\spec_{8,2}$ & { 15}\,\%  & { 85}\,\% \\
$\spec_{4,2}$ & { 36}\,\% & { 64}\,\% & $\spec_{10,4}$ & { 14}\,\% & { 86}\,\% \\
$\spec_{6,4}$ & { 46}\,\% & { 54}\,\% & $\spec_{8,0}$ & { 100}\,\% & { 0}\,\%  \\
$\spec_{8,6}$ & { 63}\,\% & { 37}\,\% & $\spec_{10,2}$ & { 18}\,\% & { 82}\,\%  \\
$\spec_{10,8}$ & { 86}\,\% & { 14}\,\% & $\spec_{10,0}$ & { 100}\,\% & { 0}\,\% \\
\hline
\end{tabular}
\caption{Multiplicities on random quartic symmetroids}
\label{tab:quarticspectra}
\end{table}
We draw random linear forms $\ell \in \QQ[x_0,x_1,x_2,x_3]_1$ with coefficients uniformely
distributed in $\ZZ \cap [-100,100]$, and we compute the solution in \eqref{HP} with $f =
\det A$ and $e=\Id_4$. \new{Note that in this case the standard section of the hyperbolicity
cone is given by $1=\left\langle \Id_4, A\right\rangle=\text{Trace}(A)$, hence we restrict the homogeneous pencil
$A(x)$ to the affine space of matrices with trace $1$.}

The multiplicity of a solution $x^* \in \spec_{\rho,\sigma}$
in this case corresponds to the corank of $A(x^*)$.
In Table \ref{tab:quarticspectra} we report on the percentage for the multiplicity
at a minimizer on 1000 tries. There are only two possible multiplicities, that is $1$ and $2$.
Feasible points with multiplicity $2$ correspond to the singularities of the determinant
lying on $\spec_{\rho,\sigma}$.

We finally generated other representatives of the classes. We observe that percentages can
change; indeed these depend not only on the topology of the symmetroid, but also on how the
singularities on $\partial \spec_{\rho, \sigma}$ are exposed. We believe that this approach can
be useful to solve similar classification problems of larger size. \hfill $\blacksquare$
\end{example}

Algorithm \ref{solvehp} still works without the assumption that $f$ has a determinantal representation.
We test our algorithm on one such example.

\begin{example}
Let $A(x)$ be a $5 \times 5$ homogeneous symmetric linear matrix in $4$ variables $x_0,x_1,x_2,x_3$,
with $A(e) \succ 0$ for some $e \in \RR^4$. Let $f = \det A$. Then the directional derivative of $f$
in direction $e$, that is the polynomial
\[
D_e^{(1)}(f) = \sum_{i=0}^n e_i \frac{\partial f}{\partial x_i},
\]
is hyperbolic with respect to $e$ (hence the same is true for the $k-$th derivative $D_e^{(1)}(f)$,
$1 \leq k \leq 5$, by induction), but in general does not admit a determinantal representation.
For example, let
\[
A(x) = 
{\footnotesize
\begin{bmatrix}
x_0+x_3 & 2x_1+2x_3 & x_1+3x_3 & x_2 & x_2+3x_3 \\
2x_1+2x_3 & x_0+4x_1+3x_3 & x_1-x_2+6x_3 & x_1+x_2-2x_3 & x_1+x_2+4x_3 \\ 
x_1+3x_3 & x_1-x_2+6x_3 & x_0+x_1+8x_3 & -x_1-x_2-3x_3 & -x_1-x_2+6x_3 \\
x_2 & x_1+x_2-2x_3 & -x_1-x_2-3x_3 & x_0+x_2+x_3 & x_1+2x_2-x_3 \\
x_2+3x_3 & x_1+x_2+4x_3 & -x_1-x_2+6x_3 & x_1+2x_2-x_3 & x_0+x_2+4x_3 \\
\end{bmatrix}.
}
\]
Then $f=\det A$ is hyperbolic with respect to $e = (1,0,0,0)$ (corresponding to
the identity matrix $\Id_5$), and the quintic real hypersurface $\{x \in \RR^4 \mymid f(x) = 0\}$ has
four singularities. \new{As in the previous example, we cut the hyperbolicity cone with the
condition $\text{Trace}(A(x))=1$ defining the affine space $L_e$.} The derivative $D_e^{(1)}(f)$ defines a singular
quartic, still with four nodes and hyperbolic with respect to $e$ (see Figure \ref{Fig:NodalQuartic}),
which is not representable as a determinant of a symmetric pencil. Let
$\Lambda_+(D_e^{(1)}(f),e)$ be its hyperbolicity cone.

Optimizing generic linear functions over $\new{\Seclambda_+(D_e^{(1)}(f),e)}$ yields solutions of multiplicity one
(smooth boundary points) for 64\% of the time, and solutions corresponding to singular points (of multiplicity
2) for 36\% of the time, on average. An example of multiplicity two is any multiple of the vector with coordinates
\[
\begin{array}{llll}
x_0 = \frac{1}{2} \,\,\,\, & x_1 = 0 \,\,\,\, & x_2 = \frac{1}{2} \,\,\,\, & x_3 = 0
\end{array}
\]
which in this case are rational numbers. A smooth point on the boundary of $\Lambda_+(D_e^{(1)}(f),e)$ (multiplicity one),
whose coordinates are given as elements of certified rational intervals, with 10 significant decimal digits, is:
\[
\begin{array}[b]{l}
x_0 \in [\frac{1697352983372573029285}{1180591620717411303424}, \frac{212169122921574321063}{147573952589676412928}] \approx 1.437713900  \\[.3em]
x_1 \in [-\frac{29707767148026666593}{147573952589676412928}, -\frac{29707767148024593931}{147573952589676412928}] \approx -0.2013076605 \\[.3em]
x_2 \in [-\frac{18765770300641154993}{73786976294838206464}, -\frac{18765770300640685591}{73786976294838206464}] \approx -0.2543236116 \\[.3em]
x_3 \in [\frac{21153099339285995043}{1180591620717411303424}, \frac{661034354352767111}{36893488147419103232}] \approx 0.01791737208. \\[.3em]
\end{array}\eqno{\blacksquare}
\]
\end{example}

In our last example, we show how our algorithm can certify lower
bounds of Renegar's method, which uses derivative
cones for hyperbolic programming.

\begin{example}[Nie, Parrilo, Sturmfels \cite{Nie2008}; Saunderson, Parrilo \cite{Saunderson2015}]
We consider the semidefinite representation of the $3$-ellipse, as computed in \cite{Nie2008}.
Given $n$ points $p_1, \ldots, p_n$ in $\RR^2$, and a nonnegative real number $D$, the $n$-ellipse
is the plane compact curve consisting of those points the sum of whose distances to $p_1, \ldots,
p_n$ is $D$ (which is called the radius of the ellipse). This set is the boundary of a spectrahedral
hyperbolicity cone ${\mathcal E}_n$, for every
$D$. Moreover, one has the stronger property that the algebraic boundary of ${\mathcal E}_n$ is a
determinantal hypersurface \cite{Nie2008}. However, it may be
advantageous not to compute such a determinantal representation and
work with the hyperbolic polynomial directly, \new{which in the case of the $3-$ellipse ${\mathcal E}_3$ with foci $(0,0),(3,0)$ and $(0,4)$ and radius 8, is the degree-$8$-polynomial
{\small
\begin{align*}
f & = 9x^8-72x^7z+36x^6y^2-96x^6yz-1564x^6z^2-216x^5y^2z+960x^5yz^2+9912x^5z^3+ \\ 
& +54x^4y^4-288x^4y^3z-4748x^4y^2z^2+12256x^4yz^3+70782x^4z^4-216x^3y^4z+ \\
& +1920x^3y^3z^2+17424x^3y^2z^3-71040x^3yz^4-262296x^3z^5+36x^2y^6-288x^2y^5z-\\
& -4804x^2y^4z^2+27712x^2y^3z^3+137228x^2y^2z^4-564384x^2yz^5-616140x^2z^6-\\
& -72xy^6z+960xy^5z^2+7512xy^4z^3-76416xy^3z^4-389688xy^2z^5+1372608xyz^6+\\
& +1610280xz^7+9y^8-96y^7z-1620y^6z^2+15456y^5z^3+58014y^4z^4-349728y^3z^5-\\
& -457380y^2z^6+1723680yz^7+893025z^8.
\end{align*}
}
restricted to the plane $z=1$.
}
Let $A=A(x,y,z)$ be the linear matrix representation of ${\mathcal E}_3$
given in \cite[Example 1]{Saunderson2015}, and hence $f = \det A$. The corresponding $3$-ellipse ${\mathcal E}_3$ has
the semidefinite representation $\{(x,y) \in \RR^2 \mymid A(x,y,1) \succeq 0\}$.
The boundary $\partial{\mathcal E}_n$ could contain one or more singularity, and if this happens, these coincide with
some of the base points $p_1, \ldots, p_k$. \new{This is the case for the $3-$ellipse we consider, which contains
the point $(3,0)$. The polynomial $f$ is hyperbolic with respect to $e=(1,1,1)$, hence $L_e$ is given by the equation
$x+y+z=1$}.

Our exact algorithm for hyperbolic programs, as we have already remarked, is able to certify rational intevals containing the coordinates of a solution, its multiplicity, and
also the optimal value of the linear function on the solution. In order to measure the error when considering Renegar relaxations for solving hyperbolicity programs, we consider
the relaxations of ${\mathcal E}_3$, namely the hyperbolicity cones of the derivatives of $f$ in the direction $e=(1,1,1)$. The infimum of the linear function
$\ell(x,y,z)=x+2y+3z+4$ on ${\mathcal E}_3\new{\cap L_e}$ is attained at the unique point of multiplicity 2, that is \new{at $(3/4,0,1/4)$ (projectively equivalent to $(3,0,1)$)}.
We optimize the same linear function over the derivative relaxations and look at the sequence of optimal values.

\begin{table}[!ht]
\begin{tabular}{|cccccc|}
\hline
$k$ & $\approx x^*$ & $m^*$ & $\ell(x^*)$ & Degree of $q(t)$ & \new{Alg. deg. of $x^*$} \\
\hline
0 & $(0.750, 0.000, 0.250)$ & 2 & 5.500000000 & 56 & \new{1} \\
1 & $(0.759, -0.018, 0.258)$ & 1 & 5.499158216 & 42 & \new{30} \\
2 & $(0.797, -0.051, 0.250)$ & 1 & 5.456196445 & 30 & \new{26} \\
3 & $(0.862, -0.116, 0.254)$ & 1 & 5.392044926 & 20 & \new{20} \\
4 & $(0.981, -0.254, 0.273)$ & 1 & 5.292250029 & 12 & \new{12} \\
5 & $(1.336, -0.762, 0.426)$ & 1 & 5.090555573 & 6 & \new{6} \\
\hline
\end{tabular}
\caption{Derivative relaxations of the $3-$ellipse}
\label{tab:3ellipse}
\end{table}

In \new{Table \ref{tab:3ellipse}}, $k$ denotes the order of derivation of $f$, and $x^*,m^*,f(x^*)$ denote the minimizer, its multiplicity, \new{and the optimal
value of $\ell$ on the given derivative cone $\Lambda_+(D^{(k)}_e(f),e)$}, respectively \new{(here $D^{(k)}_e(f)$ denotes the $k-$th derivative of $f$ in direction $e$).
Moreover, we report in the fifth and sixth column, the degree of the polynomial $q(t)$ in the
rational representation which is computed by our algorithm ({\it cf. \eqref{RUR}}) and the  degree of the coordinates of $x^*$ (as algebraic numbers over $\QQ$).
We first remark that the value in the fifth column decreases when considering derivative relaxations, which implies the following fact: the higher the derivative
relaxation order is, the faster the exact representation, and hence the lower bound, can be computed. We also remark that for $k=0,1,2$ the value in the sixth column is
lower. This is because the polynomial $q$ is reducible, and for $k=0$ (resp $k=1,2$) has a linear (resp. degree 30, degree 26) factor which corresponds
to the minimum polynomial of the extensions $\QQ[x^*_i]$ over $\QQ$. Indeed, in these cases the variety of critical points factors over $\QQ$.}


\end{example}

\end{document}